\newtheorem{theorem}{Theorem}
\newtheorem{corollary}[theorem]{Corollary}
\title{The dib-chromatic number of digraphs}
\author{Nahid Y. Javier-Nol\footnotemark[2] \and  Christian Rubio-Montiel\footnotemark[3] \and Ingrid Torres-Ramos\footnotemark[3]}
\begin{document}
\maketitle

\def\thefootnote{\fnsymbol{footnote}}
\footnotetext[2]{Departamento de Matem{\' a}ticas, Universidad Aut{\' o}noma Metropolitana, Iztapalapa, CDMX. {\tt nahid@xanum.uam.mx}.}
\footnotetext[3]{Divisi{\' o}n de Matem{\' a}ticas e Ingenier{\' i}a, FES Acatl{\' a}n, Universidad Nacional Aut{\'o}noma de M{\' e}xico, Naucalpan, Mexico. {\tt christian.rubio@acatlan.unam.mx}.}

\begin{abstract}
We study an extension to directed graphs of the parameter called the $b$-chromatic number of a graph in terms of acyclic vertex colorings: the dib-chromatic number. We give general bounds for this parameter. We also show some results about tournaments and regular digraphs.
\end{abstract}
\textbf{Keywords.} Complete coloring, acyclic coloring, b-chromatic number, Nordhaus-Gaddum relations, directed graph.

\textbf{Mathematics Subject Classification.} 05C15, 05C20.


\section{Introduction}

In this paper, we introduce the concept of the dib-chromatic number of a digraph, which is a generalization of the b-chromatic number of a graph. The term $b$-chromatic number of a digraph is reserved for the $b$-chromatic number of the underlying graph of a given digraph; see \cite{MR3511873}.

To begin with, we recall concepts about graphs. A coloring of a graph $G$ is \emph{proper} if each color class induces an empty subgraph. The \emph{chromatic number} $\chi(G)$ of $G$ is the smallest natural number $k$ such that $G$ admits a proper coloring with $k$ colors.

A \emph{$b$-coloring} of a graph $G$ with $k$ colors is a coloring of the vertices of $G$ such that in each color class there exists a vertex having neighbors in all the other $k-1$ color classes. Such a vertex will be called a \emph{$b$-vertex}. The \emph{$b$-chromatic number} $b(G)$ of a graph $G$ is the largest $k$ for which $G$ has a proper $b$-coloring with $k$ colors \cite{MR1670155}.

This parameter belongs to a family of parameters that comes from complete colorings, namely, the chromatic number, the Grundy number, and the achromatic number (and their no-proper versions). A coloring of a graph $G$ is \emph{complete} if for every pair of colors $i$ and $j$ there exists an edge $uv$ such that $u$ is colored $i$ and $v$ is colored $j$. In fact, any proper coloring of $G$ with $\chi(G)$ colors is complete.

Another way to understand this parameter comes from algorithmic graph theory. Suppose a graph with a given proper coloring. We would like to perform an operation to reduce the number of colors as follows. Recolor its vertices, but not necessarily each one, with the same color. Clearly, such recoloring is not possible if each color class contains a $b$-vertex. Hence, the $b$-chromatic number of the graph serves as the worst case for the number of colors used by this coloring heuristic method.

Next, we define digraph, the dib-chromatic number of a digraph, and we recall some valuable results about the digrundy and diachromatic numbers.

In this paper, we consider simple, loopless, and finite digraphs. A \emph{digraph} $D$ is a pair $(V,A)$ where $V$ is the set of vertices and $A$ is the set of darts. A \emph{dart} (or \emph{arc}) is a pair $(u,v)$ of vertices; for short, we write $uv$, such that $u\not=v$.

A \emph{coloring} of a digraph $D$ is \emph{acyclic} if each color class induces a subdigraph without directed cycles. The \emph{dichromatic number} $dc(D)$ of a digraph $D$ is the smallest number $k$ such that $D$ admits an acyclic coloring with $k$ colors \cite{MR693366}.

A coloring of a digraph $D$ is \emph{complete} if for every pair of different colors $(i,j)$ there exists at least one dart $uv$ such that $u$ is colored $i$ and $v$ is colored $j$ \cite{MR3329642} (see also \cite{MR2998438,MR2895432}). Any acyclic coloring of $D$ with $dc(D)$ colors is also a complete coloring. The diachromatic number $dac(D)$ of $D$ is the largest number of colors for which there exists a complete and acyclic coloring of $D$ \cite{MR3875016}.

Let $u$ and $v$ be two vertices of a digraph $D$ such that $f=uv$ is a dart of $D$. We say that $f$ is \emph{incident from} $u$, and \emph{incident to} $v$, while $u$ is \emph{incident to} $f$ and $v$ is \emph{incident from} $f$. The out-degree $deg ^+(u)$ of a vertex $u$ is the number of darts that are incident from $u$. Similarly, the in-degree $deg^-(v)$ of a vertex $v$ is the number of darts incident to $v$. The degree $deg(v)$ of a vertex $v$ is defined by $deg(v)=deg^+(v)+ deg^-(v)$. We also say that $u$ is \emph{incident to} $v$ and $v$ is \emph{incident from} $u$.

A vertex $u$ is called a \emph{$b^+$-vertex} with respect to a coloring $\varsigma$ of a digraph $D$ if $u$ is incident to a vertex colored with each color different from the color of $u$. Analogously, a vertex $v$ is a \emph{$b^-$-vertex} with respect to $\varsigma$ if $v$ is incident from a vertex colored with each color different from the color of $v$.

A \emph{$b$-coloring} of $D$ is a coloring of $D$ such that each color class contains a $b^+$-vertex and a $b^-$-vertex.
We define the \emph{$dib$-chromatic number} of $D$, denoted by $dib(D)$, as the largest $k$ such that $D$ admits an acyclic $b$-coloring with $k$ colors. The existence of $b$-colorations is given in the parallel paper by one of the co-authors in \cite{montellano2025dib}.

For any digraph $D$, any $b$-coloring of $D$ is also a complete coloring, and then
\begin{equation}\label{eq1}
    dc(D)\leq dib(D)\leq dac(D).
\end{equation}

If $\Delta^+(D)$ denotes \emph{the maximum out-degree} and $\Delta^-(D)$ denotes \emph{the maximum in-degree}, then $k \leq \Delta(D)+1$ for any $b$-coloring of $D$ using $k$ colors where $\Delta(D)=\min\{\Delta^+(D),\Delta^-(D)\}$ ( or simply by $\Delta$, $\Delta^+$ and $\Delta^-$  when the digraph $D$ is understood). In particular, 
\begin{equation}\label{eq2}
    dib(D) \leq \Delta+1.
\end{equation}

Moreover, for an induced subdigraph $H$ of a digraph $D$,
\begin{equation}\label{eq3}
    dib(H) \leq dib(D).
\end{equation}

A dart $uv\in A(D)$ is symmetric if $vu\in A(D)$ and asymmetric if $vu\notin A(D)$. Two vertices $u,v$ are adjacent if the dart $uv$ is symmetric. A digraph is symmetric (resp. asymmetric) if every arc of $D$ is symmetric (resp. asymmetric). If $D$ is a symmetric digraph, it can be considered as a graph, and then $dib(D)=b(D)$.

For example, \emph{complete symmetric digraph} of order $n$, denoted by $K_n$, has both darts $(u,v)$ and $(v,u)$ for all two distinct vertices $u$ and $v$.
Then $dib(K_n)=n$ because $b(K_n)=n$.

This paper is organized into three sections: The second section contains general bounds such as the Nordhaus-Gaddum relations. The third section deals with tournaments and fourth section with the dib-chromatic number in regular digraphs. 


\section{Bounds}\label{section2}

A \textit{clique} of a digraph $D$ is an induced subdigraph of $D$ which is a complete symmetric subdigraph. A \textit{maximum clique} of $D$ is a clique such that there is no clique with more vertices, and the \textit{clique number} $\omega(D)$ of $D$ is the number of vertices in a maximum clique in $D$. Then $\omega(D)\leq dc(D)$.

Let $D$ be a digraph of $n$ vertices that are listed in some specified order. In a \emph{greedy coloring} of $D$, the vertices are colored with positive integers according to a greedy algorithm that assigns to the vertex under consideration the smallest available color. Therefore, a greedy coloring is also a complete coloring, see \cite{MR3875016,MR4426060}.

It is not hard to see that a greedy acyclic coloring of a graph $D$ using $k$ colors is a coloring of $D$ having the property that for every two colors $i$ and $j$ with $i<j$, every vertex colored $j$ has a neighbor colored $i$. Note that for each vertex $v$ in the color class $V_j$ and each color class $V_i$, with $i<j$, $N^+(v)\cap V_i\not=\emptyset$ and $N^-(v)\cap V_i\not=\emptyset$ then $k \leq \Delta(D)+1$, see also \cite{MR4426060}.

Let $\varsigma$ be a $b$-coloring of a digraph $D$ using $k$ colors. On the one hand, let $u_i$ be any $b^+$-vertex of color $i$; we say that the set $\{u_1,\dots,u_k\}$ is a \textit{positive basis} of $\varsigma$. On the other hand, let $v_i$ be any $b^-$-vertex colored $i$; we say that the set $\{v_1,\dots,v_k\}$ is a \textit{negative basis} of $\varsigma$. A $b$-coloring may have many positive and negative bases, and the intersection between positive and negative bases can be empty or not.

The following result is a generalization of a result in \cite{MR1927071}.

\begin{theorem}\label{teo1}
Let $D$ be a digraph of order $n$. Assume that the vertices $u_1,\dots ,u_n$ of $D$ are ordered such that $deg^+(u_1)\geq deg^+(u_2) \geq \dots \geq deg^+(u_n)$. Let $t^+(D) := \max \{i \colon deg^+(u_i) \geq i-1\}$ be the maximum number $i$ such that $D$ contains at least $i$ vertices of out-degree at least $i-1$. Now, re-label the vertices as $v_1,\dots ,v_n$ such that $deg^-(v_1)\geq deg^-(v_2) \geq \dots \geq deg^-(v_n)$. Let $t^-(D) := \max \{i \colon deg^-(v_i) \geq i-1\}$ be the maximum number $i$ such that $D$ contains at least $i$ vertices of in-degree at least $i-1$. Therefore, \[dib(D) \leq t(D):=\min\{t^+(D),t^-(D)\}.\]
\end{theorem}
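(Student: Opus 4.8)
The plan is to bound $dib(D)$ by analyzing the structure forced by a $b$-coloring. Suppose $D$ admits an acyclic $b$-coloring $\varsigma$ with $k$ colors. By definition, each color class contains a $b^+$-vertex, so there is a positive basis $\{u_1,\dots,u_k\}$ where each $u_j$ is incident to vertices of all $k-1$ other colors; hence $deg^+(u_j) \geq k-1$ for every $j$. This produces $k$ vertices of out-degree at least $k-1$ in $D$.

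First I would show $k \leq t^+(D)$. The $k$ vertices of the positive basis all have out-degree at least $k-1$, so $D$ contains at least $k$ vertices of out-degree at least $k-1$. Now I invoke the definition of $t^+(D)$ as the largest $i$ for which $D$ has at least $i$ vertices of out-degree at least $i-1$. Since the condition ``$D$ has at least $k$ vertices of out-degree $\geq k-1$'' is exactly the statement that $i=k$ satisfies the defining property, and $t^+(D)$ is the \emph{maximum} such $i$, we conclude $k \leq t^+(D)$. The one point requiring care is the monotonicity argument that justifies reading this off from the sorted sequence: with the $u_i$ sorted by nonincreasing out-degree, having $k$ vertices of out-degree $\geq k-1$ means $deg^+(u_k)\geq k-1$, so $i=k$ is in the set defining $t^+(D)$.

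By the fully symmetric argument applied to $b^-$-vertices, each color class contains a vertex incident \emph{from} all other $k-1$ colors, giving a negative basis $\{v_1,\dots,v_k\}$ with $deg^-(v_j)\geq k-1$ for all $j$; the identical reasoning yields $k \leq t^-(D)$. Combining the two bounds gives $k \leq \min\{t^+(D),t^-(D)\} = t(D)$. Since this holds for every $k$ realizing an acyclic $b$-coloring, and $dib(D)$ is the largest such $k$, we obtain $dib(D)\leq t(D)$.

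I expect the main (though modest) obstacle to be the bookkeeping that translates ``$k$ vertices of high out-degree'' into the sorted-degree definition of $t^+$ cleanly, i.e.\ confirming that the existence of $k$ vertices of out-degree $\geq k-1$ forces $k$ to lie in the index set $\{i \colon deg^+(u_i)\geq i-1\}$. Notably, the acyclicity hypothesis plays no role here; the bound depends only on the degree conditions imposed by the $b^+$- and $b^-$-vertices, so in fact the same inequality would hold for any $b$-coloring. This mirrors the undirected result of \cite{MR1927071} being extended independently in the out- and in-directions and then intersected.
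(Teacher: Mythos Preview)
Your proof is correct and follows essentially the same approach as the paper: both arguments observe that a $b$-coloring with $k$ colors forces $k$ vertices of out-degree $\geq k-1$ (via the positive basis) and $k$ vertices of in-degree $\geq k-1$ (via the negative basis), and then read off $k\leq t^+(D)$ and $k\leq t^-(D)$ from the sorted degree sequences. The only cosmetic difference is that the paper phrases this by contradiction (assuming $dib(D)\geq t(D)+1$ and deriving that the $(t+1)$-st sorted out- and in-degrees would be at least $t(D)$), whereas you argue directly; your remark that acyclicity is unused is accurate and worth noting.
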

\begin{proof}
The proof is by contradiction. Suppose $dib(D)\geq t(D)+1$ then there exist at least $dib(D)$ vertices of out-degree and at least $dib(D)$ vertices of in-degree no smaller than \[dib(D)-1\geq t(D).\]
Therefore, \[deg^+(u_1),deg^+(u_2),\dots,deg^+(u_{t+1})\geq t(D)\] and \[deg^-(v_1),deg^-(v_2),\dots,deg^-(v_{t+1})\geq t(D),\] are a contradiction to the definition of $t(D)$.
\end{proof}

The \textit{complement} $D^c$ of a digraph $D$ is the digraph whose vertex set is $V(D)$ and where $uv$ is a dart of $D$ if and only if $uv$ is not a dart of $D^c$.

In \cite{MR3875016,MR4426060} were proven the Nordhaus-Gaddum relations for the dichromatic number, namely, $dc(D)+dc(D^c)\leq n+1$ for any digraph of order $n$. The result holds for the dib-chromatic number.

The following result is a generalization of a result in \cite{MR1927071}.

\begin{theorem}\label{teo2}
If $D$ is a digraph of order $n$, then
\[dib(D)+dib(D^c)\leq n+1.\]
\end{theorem}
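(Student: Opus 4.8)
The plan is to reduce the statement to a Nordhaus--Gaddum inequality for the single parameter $t^+$ and then settle that by a short counting argument. By Theorem~\ref{teo1} we have $dib(D)\le t(D)=\min\{t^+(D),t^-(D)\}\le t^+(D)$, and likewise $dib(D^c)\le t^+(D^c)$. Hence it suffices to establish the one inequality $t^+(D)+t^+(D^c)\le n+1$; adding the two bounds on $dib$ then gives the claim at once. (One could work with $t^-$ instead, the two choices being symmetric; the freedom to discard the $\min$ is what keeps the proof to a single degree sequence.)

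The key tool is the complementation identity for out-degrees. For every vertex $v$, the $n-1$ potential out-darts $vu$ with $u\ne v$ are split between $D$ and $D^c$, so
\[
deg^+_D(v)+deg^+_{D^c}(v)=n-1 .
\]
This is the directed analogue of the identity $deg_G(v)+deg_{\bar G}(v)=n-1$ underlying the graph result of \cite{MR1927071}.

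I would then argue by contradiction. Write $p=t^+(D)$ and $q=t^+(D^c)$ and suppose $p+q\ge n+2$. By the definition of $t^+$, the set $S$ of vertices with $deg^+_D\ge p-1$ satisfies $|S|\ge p$, and the set $T$ of vertices with $deg^+_{D^c}\ge q-1$ satisfies $|T|\ge q$. Since $|S|+|T|\ge p+q\ge n+2>n=|V(D)|$, inclusion--exclusion forces $S\cap T\ne\emptyset$. Choosing $v\in S\cap T$ yields $deg^+_D(v)+deg^+_{D^c}(v)\ge(p-1)+(q-1)=p+q-2\ge n$, which contradicts the identity above, since that sum equals $n-1$. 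Therefore $p+q\le n+1$, completing the reduction and hence the theorem.

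The argument is brief, and I expect no genuine obstacle beyond one point of bookkeeping: verifying that the out-degree identity is exactly $n-1$ rather than $n$. The care needed is to remember that the out-darts $vu$ and the in-darts $uv$ are counted separately, so only the $n-1$ ordered pairs $vu$ contribute to $deg^+(v)$, each lying in precisely one of $D$ and $D^c$. Once that is in hand, the crux is simply the recognition that Theorem~\ref{teo1} lets us drop to the single parameter $t^+$ and run a pigeonhole count identical in spirit to the undirected case.
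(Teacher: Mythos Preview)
Your proof is correct but follows a genuinely different strategy from the paper's. The paper argues directly from the $b$-colorings: fixing optimal $b$-colorings of $D$ and $D^c$, with color classes $C_1,\dots,C_{dib(D)}$ for $D$, it splits into two cases according to whether some $C_i$ consists entirely of $b^+$-vertices of $D^c$. If so, the chosen $b^+$-vertex $u_i\in C_i$ of $D$ satisfies both $deg^+_D(u_i)\ge dib(D)-1$ and $deg^+_{D^c}(u_i)\ge dib(D^c)-1$, and the identity $deg^+_D+deg^+_{D^c}=n-1$ finishes. Otherwise each $C_i$ contains a vertex that is not a $b^+$-vertex of $D^c$, so $D^c$ has at most $\sum_i(|C_i|-1)=n-dib(D)$ many $b^+$-vertices, forcing $dib(D^c)\le n-dib(D)$.

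Your route instead passes through Theorem~\ref{teo1}, proving the stronger auxiliary inequality $t^+(D)+t^+(D^c)\le n+1$ by a single pigeonhole step. This is cleaner, avoids the case split, and the intermediate Nordhaus--Gaddum bound for $t^+$ (and symmetrically for $t^-$ and $t$) is of independent interest. The paper's approach, by contrast, is self-contained and does not rely on Theorem~\ref{teo1}. Both arguments ultimately pivot on the same complementation identity $deg^+_D(v)+deg^+_{D^c}(v)=n-1$.
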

\begin{proof}
Consider $b$-colorings of $D$ and $D^c$ with $dib(D)$ and $dib(D^c)$ colors, respectively. Observe that for any $b^+$-vertex $u$ in $D$, we have $deg^+_D(u)\geq dib(D)-1$ and then $deg^+_{D^c}(u)\leq n-dib(D)$. We denote the color class of $D$ with the color $i$ by $C_i$ for all $i\in\{1,2,\dots,dib(D)\}$ and $u_i$ is a $b^+$-vertex in $C_i$.

Case 1. There exists a color $i$ such that for every $u$ in $C_i$, $u$ is a $b^+$-vertex in $D^c$. In particular, $u_i$ is a $b^+$-vertex in $D$ and $D^c$, therefore $dib(D^c)-1\leq deg^+_{D^c}(u_i)\leq n-dib(D)$ and then the result follows.

Case 2. For every $i$, there exists $u$ in $C_i$ such that it is not a $b^+$-vertex in $D^c$. Then $dib(D^c)\leq\overset{dib(D)}{\underset{i=1}{\sum}}(|C_i|-1)\leq n-dib(D)$, and the result follows.
\end{proof}

An \textit{independent} set of vertices in a digraph $D$ is a set $I$ of vertices such that for every two vertices in $I$, there is no dart connecting the two. The \textit{independence number} $\beta(D)$ of $D$ is the largest possible order of an independent set.

The following result is a generalization of a result in \cite{MR1927071}.

\begin{corollary}\label{cor3}
For any digraph $D$ of order $n$, we have $dib(D)\leq n-\beta(D)+1.$
\end{corollary}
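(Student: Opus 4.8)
The plan is to deduce the bound from the Nordhaus--Gaddum inequality of Theorem~\ref{teo2}, rather than re-running a degree count from scratch. The pivotal observation is that complementation turns independence into a clique: if $I$ is a maximum independent set of $D$, so that $|I|=\beta(D)$, then for any two distinct $u,v\in I$ neither $uv$ nor $vu$ is a dart of $D$, and hence by the definition of the complement both $uv$ and $vu$ are darts of $D^c$. Thus the subdigraph of $D^c$ induced by $I$ is a complete symmetric digraph, i.e.\ a clique of $D^c$ on $\beta(D)$ vertices, so $\omega(D^c)\geq \beta(D)$.

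Next I would chain this with the elementary inequalities already recorded in the excerpt. Since $\omega(D^c)\leq dc(D^c)$ and, by \eqref{eq1}, $dc(D^c)\leq dib(D^c)$, we obtain $dib(D^c)\geq \omega(D^c)\geq \beta(D)$. Feeding this into Theorem~\ref{teo2}, namely $dib(D)+dib(D^c)\leq n+1$, yields
\[
dib(D)\leq n+1-dib(D^c)\leq n+1-\beta(D),
\]
which is exactly the claim.

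An alternative, more self-contained route goes through Theorem~\ref{teo1}: every vertex $u\in I$ has all of its in- and out-neighbours inside $V\setminus I$, so $deg^+(u),deg^-(u)\leq n-\beta(D)$; a short counting argument then shows $t^+(D)\leq n-\beta(D)+1$ (if $t^+(D)\geq n-\beta(D)+2$ there would be more than $|V\setminus I|$ vertices of out-degree exceeding $n-\beta(D)$, all necessarily outside $I$, a contradiction), and Theorem~\ref{teo1} gives $dib(D)\leq t(D)\leq n-\beta(D)+1$. Either way the only real content is the clique/degree estimate; once that is in place the conclusion is immediate. The main point to watch is the arc convention for the complement, so that an independent set of $D$ genuinely produces symmetric darts in $D^c$; the remainder is a direct substitution into Theorem~\ref{teo2}.
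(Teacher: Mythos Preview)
Your primary argument is correct and is essentially identical to the paper's own proof: the paper also observes that $dib(D^c)\geq \omega(D^c)=\beta(D)$ and then invokes Theorem~\ref{teo2}. The alternative route through Theorem~\ref{teo1} is a valid extra remark but is not needed.
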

\begin{proof}
Since $dib(D^c)\geq \omega(D^c) = \beta(D)$, by Theorem \ref{teo2}, the result follows.
\end{proof}

Another upper bound is given in Theorem 1 of \cite{MR3875016}, which states that for any digraph $D$ of $m$ darts,  $dib(D)\leq \frac{1+\sqrt{1+4m}}{2}$ since any $b$-coloring of a digraph $D$ is complete.

We give two more upper bounds of $dib(D)$ for any digraph $D$ through the parameter $dac(D)$. First, we give a generalization of a result of \cite{MR1108075}.

\begin{theorem}\label{teo4}
For every digraph $D$ of order $n$, \[dac(D)\leq \frac{n+\omega(D)}{2}.\]
\end{theorem}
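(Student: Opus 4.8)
The plan is to argue purely from the completeness of the optimal coloring, ignoring the acyclicity condition (which is only needed to define $dac(D)$, not to prove the bound). So I would fix a complete acyclic coloring $\varsigma$ of $D$ realizing $dac(D)=k$ colors, with color classes $C_1,\dots,C_k$, and split them into \emph{singleton} classes (those with $|C_i|=1$) and \emph{large} classes (those with $|C_i|\geq 2$). Write $s$ for the number of singleton classes. Counting vertices,
\[
n=\sum_{i=1}^{k}|C_i|\geq s\cdot 1+(k-s)\cdot 2=2k-s,
\]
which rearranges to $s\geq 2k-n$. The whole proof then reduces to bounding $s$ from above by $\omega(D)$.

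The key step is to show that the union of the singleton classes induces a complete symmetric subdigraph, hence a clique. Let $\{u\}=C_i$ and $\{w\}=C_j$ be two singleton classes, say with $i\neq j$. By completeness applied to the ordered pair $(i,j)$, there is a dart from a vertex colored $i$ to a vertex colored $j$; since $u$ is the only vertex of color $i$ and $w$ the only vertex of color $j$, this dart must be $uw$. Applying completeness to the reversed ordered pair $(j,i)$ forces, symmetrically, the dart $wu$. Thus both $uw$ and $wu$ are darts of $D$, so $u$ and $w$ are adjacent (the dart is symmetric). As this holds for every pair of singleton classes, the $s$ vertices lying in singleton classes span a complete symmetric induced subdigraph, giving $s\leq\omega(D)$.

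Combining the two bounds yields $2k-n\leq s\leq\omega(D)$, that is, $2\,dac(D)\leq n+\omega(D)$, which is the claimed inequality. The counting half is entirely routine; the only genuine content is the clique step, and there the one subtlety worth stating explicitly is that completeness here is a condition on \emph{ordered} color pairs, so that a single pair of singleton classes simultaneously forces darts in \emph{both} directions. I expect this directedness to be the main place where the argument differs from the undirected original of \cite{MR1108075}, but it causes no real obstacle, since the definition of completeness supplies exactly the two darts needed.
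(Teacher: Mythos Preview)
Your proof is correct and follows essentially the same route as the paper: bound the number $s$ of singleton color classes above by $\omega(D)$ and below by $2\,dac(D)-n$ via the obvious count. If anything, your write-up is more complete, since you explicitly justify the step ``at most $\omega(D)$ color classes are singletons'' by using completeness on both ordered pairs $(i,j)$ and $(j,i)$ to force a symmetric dart between any two singleton vertices; the paper simply asserts this consequence.
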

\begin{proof}
Suppose that $\omega(D)=k$ and $dac(G)=l$. Then $1\leq k \leq l$. So, there exists a complete acyclic coloring of $D$ with $l$ colors, but no complete acyclic coloring of $D$. Hence $V(D)$ can be partitioned into $l$ color classes $V_1,V_2,\dots,V_l$ such that for every two distinct integers $i,j\in\{1,2,\dots,l\}$, there is a vertex $u$ of $V_i$ and a $v$ vertex of $V_j$ such that $(u,v)$ is a dart of $D$. Since $\omega(D)=k$, at most $k$ of the sets $V_1,V_2,\dots,V_l$ can consist of a single vertex. Therefore, at least $l-k$ of these sets consist of two or more vertices, and so $n \geq 2(l-k) + k$. Therefore, $n \geq 2l-k$. Since $k\geq 1$, the result follows.
\end{proof}

The \textit{acyclic number} $\mathcal{A}(D)$ of $D$ is the largest possible order of an acyclic subdigraph of $D$. Then $\beta(D)\leq \mathcal{A}(D)$.

\begin{theorem}\label{teo5}
For every digraph $D$ of order $n$,
\[\frac{n}{\mathcal{A}(D)}\leq dc(D)\leq n-\mathcal{A}(D)+1.\]
\end{theorem}
\begin{proof}
Suppose that $dc(D)=k$ and let $\varsigma$ be an acyclic coloring of $D$ with $k$ colors, and $V_1,V_2,\dots ,V_k$ a partition given by $\varsigma$. Since
\[n=\overset{k}{\underset{i=1}{\sum}}|V_i|\leq k\mathcal{A}(D),\]
it follows the lower bound.

Next, let $A$ be an acyclic digraph of $\mathcal{A}(D)$ vertices and assign the color $1$ to each vertex of $A$. We color the remaining vertices of the digraph following greedy acyclic coloring. Hence
$dc(D)\leq |V(D)-A|+1$ and the upper bound follows.
\end{proof}

Finally, we give the following bound.

\begin{corollary}\label{cor6}
For every digraph $D$ of order $n$,
\[dac(D)\leq \left\lceil n-\frac{\mathcal{A}(D)}{2}\right\rceil.\]
\end{corollary}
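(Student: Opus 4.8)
The plan is to obtain the bound by composing two of the inequalities proved just above and then tidying up with an integrality/rounding argument, so essentially no new combinatorial work is needed. First I would start from Theorem~\ref{teo4}, which already bounds the diachromatic number in terms of the clique number:
\[
dac(D)\le\frac{n+\omega(D)}{2}.
\]
The next step is to replace $\omega(D)$ by an expression involving the acyclic number. Here I would use the basic inequality $\omega(D)\le dc(D)$ together with the upper bound of Theorem~\ref{teo5}, namely $dc(D)\le n-\mathcal{A}(D)+1$. Chaining these gives $\omega(D)\le n-\mathcal{A}(D)+1$, which is precisely an upper estimate on $\omega(D)$ that can be fed back into Theorem~\ref{teo4}.

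Substituting this into the first displayed inequality yields
\[
dac(D)\le\frac{n+\bigl(n-\mathcal{A}(D)+1\bigr)}{2}=n-\frac{\mathcal{A}(D)}{2}+\frac12.
\]
Since $dac(D)$ is a nonnegative integer, it follows that $dac(D)\le\bigl\lfloor n-\tfrac{\mathcal{A}(D)}{2}+\tfrac12\bigr\rfloor$, and it remains only to identify this floor with the claimed ceiling.

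The last step is where the only real care is required, although it is routine. Because $n-\mathcal{A}(D)/2$ is always an integer multiple of $\tfrac12$, the elementary identity $\lfloor x+\tfrac12\rfloor=\lceil x\rceil$ applies with $x=n-\mathcal{A}(D)/2$, so that $\bigl\lfloor n-\tfrac{\mathcal{A}(D)}{2}+\tfrac12\bigr\rfloor=\bigl\lceil n-\tfrac{\mathcal{A}(D)}{2}\bigr\rceil$ and the corollary follows. To make this rigorous I would split into the two cases $\mathcal{A}(D)$ even and $\mathcal{A}(D)$ odd and check each by hand: when $\mathcal{A}(D)$ is even both sides equal the integer $n-\mathcal{A}(D)/2$, and when $\mathcal{A}(D)$ is odd both sides equal $n-(\mathcal{A}(D)+1)/2$. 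I do not expect any genuine obstacle, since the whole argument is a short composition of previously established inequalities; the only point to watch is that the parity bookkeeping in the final rounding be carried out explicitly rather than merely asserted.
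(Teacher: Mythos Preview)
Your proposal is correct and follows exactly the paper's approach: combine Theorem~\ref{teo4} with $\omega(D)\le dc(D)\le n-\mathcal{A}(D)+1$ from Theorem~\ref{teo5}, then round. One small slip: in your odd-$\mathcal{A}(D)$ case both sides equal $n-(\mathcal{A}(D)-1)/2$, not $n-(\mathcal{A}(D)+1)/2$.
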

\begin{proof}
Since $dac(D)\leq \frac{n+\omega(D)}{2}$ and $\omega(D)\leq dc(D)\leq n-\mathcal{A}(D)+1$ then
$dac(D)\leq \frac{2n-\mathcal{A}(D)+1}{2}$,
\[dac(D)\leq n-\frac{\mathcal{A}(D)}{2}+\frac{1}{2}\] and the result follows.
\end{proof}

Corollary \ref{cor7} shows that the bound of Corollary \ref{cor6} is tight.

The \emph{converse} of a digraph $D$ is the digraph $D^{op}$ which one obtains from $D$ by reversing all darts. Observe that $dib(D)=dib(D^{op})$ for any digraph $D$.


\section{On tournaments}

A \emph{tournament} $T_n$ is an orientation of the complete graph $K_n$. A tournament such that it is acyclic is called \emph{transitive}.

\begin{corollary}\label{cor7}
Let $T$ be a transitive tournament of order $n$. Then \[dib(T)=
\left\lceil {n}/{2}\right\rceil.\]
\end{corollary}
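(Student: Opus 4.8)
The plan is to establish the two inequalities $dib(T)\le \lceil n/2\rceil$ and $dib(T)\ge \lceil n/2\rceil$ separately. I would first fix notation: a transitive tournament admits a linear order $x_1,\dots,x_n$ with $x_ix_j\in A(T)$ precisely when $i<j$, so that $deg^+(x_i)=n-i$ and $deg^-(x_i)=i-1$. For the upper bound the crucial observation is that $T$ is itself acyclic, whence its acyclic number satisfies $\mathcal{A}(T)=n$. Substituting this into Corollary \ref{cor6} yields $dac(T)\le \left\lceil n-\tfrac{n}{2}\right\rceil=\lceil n/2\rceil$, and since $dib(T)\le dac(T)$ by \eqref{eq1}, the upper bound follows at once. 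This is the easy direction, and it is exactly where the tightness advertised before Corollary \ref{cor7} is witnessed.

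For the lower bound I would exhibit an explicit acyclic $b$-coloring using $k:=\lceil n/2\rceil$ colors. The candidate is the palindromic coloring $\varsigma(x_i)=\min\{i,\,n+1-i\}$, under which color $c$ occupies the two positions $c$ and $n+1-c$ (with the single central position $k$ occurring only when $n$ is odd and $c=k$). Acyclicity comes for free: every color class induces a subdigraph of the acyclic digraph $T$ and is therefore acyclic, so only the $b$-coloring condition requires verification.

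The heart of the matter is to show that every color class contains both a $b^+$-vertex and a $b^-$-vertex. I would nominate $x_c$ as the $b^+$-vertex and $x_{n+1-c}$ as the $b^-$-vertex of color $c$. To confirm that $x_c$ is a $b^+$-vertex, I must check that every color different from $c$ appears among its out-neighbors $x_{c+1},\dots,x_n$: the colors $c+1,\dots,k$ appear through their first copies at positions $c+1,\dots,k$, while the colors $1,\dots,c-1$ appear through their second copies, located at positions $n,\,n-1,\dots,n+2-c$, all of which exceed $c$. The corresponding statement for $x_{n+1-c}$ then follows without repeating the computation, using the identity $dib(T)=dib(T^{op})$ together with the fact that $T^{op}$ is again a transitive tournament and the palindromic coloring is invariant under reversing the vertex order, so that a $b^+$-vertex of color $c$ in $T^{op}$ is a $b^-$-vertex of color $c$ in $T$.

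The main obstacle is purely bookkeeping rather than conceptual: I must make the index inequalities above come out correctly and uniformly for both parities of $n$, where the overlapping positions near the middle and the lone central color in the odd case are the likeliest spots for an off-by-one error. The single inequality that governs everything is $n+2-c>c$ for all $c\le \lceil n/2\rceil$; verifying it is what pins down why $\lceil n/2\rceil$ colors, and no more, is exactly attainable, and combining this with the upper bound gives the claimed equality.
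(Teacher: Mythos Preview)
Your proof is correct and follows essentially the same route as the paper: the upper bound via Corollary~\ref{cor6} with $\mathcal{A}(T)=n$, and the lower bound via the palindromic coloring $\varsigma(x_i)=\min\{i,n+1-i\}$ with $x_c$ as $b^+$-vertex and $x_{n+1-c}$ as $b^-$-vertex. The only cosmetic difference is that the paper verifies the $b^-$-condition directly rather than through the converse symmetry; note also that what you actually use in that step is the invariance of the coloring under reversal, not the numerical identity $dib(T)=dib(T^{op})$.
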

\begin{proof}

Let $(v_1,v_2,\dots,v_{n})$ be the acyclic order of its vertex set. The coloring $\varsigma(v_i)=\varsigma(v_{n+1-i})=i$ for $i\in \{1,\dots,\left\lceil \frac{n}{2}\right\rceil\}$ is a $b$-coloring of $T$ because the vertex $v_i$ is incident to the vertices $\{v_{\left\lceil\frac{n}{2}\right\rceil},\dots,v_n\}$ that contain all the colors, and $v_{n+1-i}$ is incident from the vertices $\{v_1,\dots,v_{\left\lfloor\frac{n}{2}\right\rfloor}\}$ which contains all the colors. Since $\mathcal{A}(T)=n$, by Corollary \ref{cor6} we have $dib(T)\leq
\left\lceil {n}/{2}\right\rceil$ and the result follows.
\end{proof}

\begin{corollary}\label{cor8}
If $T$ is a tournament of order $n$, then $n/2\leq dc(T)dib(T)$.
\end{corollary}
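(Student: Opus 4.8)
The plan is to bound the acyclic number $\mathcal{A}(T)$ from above by a multiple of $dib(T)$, bound it from below by a quantity controlled by $dc(T)$, and then chain the two estimates together. The leverage comes from a structural observation peculiar to tournaments: a maximum induced acyclic subdigraph of a tournament is itself a transitive tournament, so the exact value of $dib$ on transitive tournaments computed in Corollary \ref{cor7} becomes available.

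First I would let $A$ be an induced acyclic subdigraph of $T$ realizing the acyclic number, so that $|V(A)| = \mathcal{A}(T)$. Since $T$ is a tournament, $A$ is an acyclic tournament and hence transitive, so Corollary \ref{cor7} applies and gives $dib(A) = \lceil \mathcal{A}(T)/2 \rceil \geq \mathcal{A}(T)/2$. Because $A$ is an \emph{induced} subdigraph of $T$, the monotonicity inequality \eqref{eq3} yields $dib(A) \leq dib(T)$. Combining these two facts produces the key estimate
\[
\mathcal{A}(T) \leq 2\, dib(T).
\]

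Next I would invoke the lower bound of Theorem \ref{teo5}, namely $\tfrac{n}{\mathcal{A}(T)} \leq dc(T)$, which rearranges to $n \leq dc(T)\,\mathcal{A}(T)$. Substituting the previous estimate gives $n \leq 2\, dc(T)\, dib(T)$, and dividing by $2$ delivers the claimed inequality $n/2 \leq dc(T)\,dib(T)$.

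The genuinely substantive step is the first one: one must recognize that the extremal configuration defining $\mathcal{A}(T)$ is both transitive (so that Corollary \ref{cor7} is the right tool) and induced (so that \eqref{eq3} may be applied to transfer the bound back to $T$). Once those two structural observations are secured, the remainder is merely a concatenation of two inequalities with no computation of any weight; I do not expect the lower bound of Theorem \ref{teo5} to cause any difficulty, since it is applied verbatim.
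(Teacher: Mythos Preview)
Your argument is correct and follows essentially the same route as the paper's proof. The only cosmetic difference is the choice of transitive subtournament: the paper takes the largest color class of an optimal acyclic coloring (which has size $x \geq n/dc(T)$ by pigeonhole), whereas you take a maximum acyclic set and invoke the lower bound of Theorem~\ref{teo5} to get the same inequality $n \leq dc(T)\,\mathcal{A}(T)$; in both cases the crux is Corollary~\ref{cor7} applied to that transitive subtournament together with the monotonicity inequality~\eqref{eq3}.
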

\begin{proof}
Let $\varsigma$ be a $dc(T)$-coloring and $x$ the order of the largest chromatic class $k$, then, $n\leq xdc(T)$. By Equation \ref{eq3} and Corollaries \ref{cor6} and \ref{cor7}, we have $\frac{x}{2} \leq dac(T)$ due to the subdigraph induced by $\varsigma^{-1}(k)$ is a transitive tournament.
\end{proof}

Let $\mathbb{Z}_{n}$ be the cyclic group of integers modulo $n$ $(n\geq 3)$ and $J$ a nonempty subset of $\mathbb{Z}_{n}\setminus \{0\}$ such that $\left\vert \{-j,j\}\cap J\right\vert =1$ for every $j\in J$ (then $|J|\leq \left\lfloor \frac{n-1}{2}\right\rfloor$). The \emph{circulant digraph} $\overrightarrow{C}_{n}(J)$ has vertex-set $V(\overrightarrow{C}_{n}(J))=\mathbb{Z}_{n}$ and dart-set 
\[E(\overrightarrow{C}_{n}(J))=\left\{ (i,j):i,j\in \mathbb{Z}_{n},j-i\in J\right\}.\]

\begin{corollary}\label{cor9}
Let $\overrightarrow{C}_{2m+1}(J)$ be a circulant tournament of order $2m+1$. If $J=\{1,2,\dots,m\}$, then \[dib(\overrightarrow{C}_{2m+1}(J))=m+1.\]
\end{corollary}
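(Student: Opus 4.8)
The plan is to prove the two inequalities $dib(\overrightarrow{C}_{2m+1}(J))\le m+1$ and $dib(\overrightarrow{C}_{2m+1}(J))\ge m+1$ separately. The upper bound is immediate: with $J=\{1,\dots,m\}$ every vertex $i$ has out-neighbourhood $\{i+1,\dots,i+m\}$ and in-neighbourhood $\{i-1,\dots,i-m\}$ (indices mod $2m+1$), so the tournament is $m$-regular with $\Delta^+=\Delta^-=m$ and hence $\Delta=m$. Thus $dib\le\Delta+1=m+1$ by~\eqref{eq2}.

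For the lower bound I would exhibit an explicit acyclic $b$-coloring with $m+1$ colors. The candidate I have in mind is the periodic coloring $\varsigma(i)=(i\bmod(m+1))+1$, under which colors $1,\dots,m$ each occur twice and color $m+1$ occurs once, so every color class has size at most two. Since a tournament has no directed $2$-cycles, each class is acyclic automatically; hence the only thing needing work is the pair of $b$-vertex conditions.

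The key observation organizing the verification is a reduction of the $b^+$/$b^-$ conditions to \emph{rainbow windows}. Because $v$ has exactly $m$ out-neighbours and there are exactly $m$ colors different from $\varsigma(v)$, a counting argument shows $v$ is a $b^+$-vertex iff its out-neighbours carry all those $m$ colors, i.e.\ iff the $m+1$ consecutive vertices $v,v+1,\dots,v+m$ are rainbow (carry all $m+1$ colors); symmetrically, $v$ is a $b^-$-vertex iff the window $v-m,\dots,v$ ending at $v$ is rainbow. So it suffices to prove that every color appears as the first vertex of some length-$(m+1)$ rainbow window (yielding a $b^+$-vertex of that color) and as the last vertex of some such window (yielding a $b^-$-vertex).

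Finally I would check the windows directly. For $j\in\{0,1,\dots,m\}$ the window $j,j+1,\dots,j+m$ involves no wraparound modulo $2m+1$ (since $j+m\le 2m$), so it consists of $m+1$ consecutive integers whose residues mod $m+1$ are pairwise distinct, making it rainbow; its first vertex $j$ has color $j+1$ and its last vertex $j+m$ has color $\varsigma(j+m)$. As $j$ ranges over $\{0,\dots,m\}$, the first colors run over all of $\{1,\dots,m+1\}$ and the last colors cover all $m+1$ colors as well, so every class contains both a $b^+$- and a $b^-$-vertex, giving $dib\ge m+1$. The main conceptual step is the rainbow-window reduction, and I expect the only delicate point to be the modular bookkeeping at the wraparound $2m+1\equiv 0$ — precisely the reason the admissible starting positions are confined to $\{0,\dots,m\}$ rather than all of $\mathbb{Z}_{2m+1}$.
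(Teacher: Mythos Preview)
Your proof is correct and follows essentially the same approach as the paper: both obtain the upper bound from \eqref{eq2} via $\Delta=m$, and both exhibit an explicit acyclic $b$-coloring with $m+1$ colors for the lower bound. In fact your periodic coloring $\varsigma(i)=(i\bmod(m+1))+1$ is, up to the rotation $i\mapsto i+m$ of the vertex-transitive tournament and a relabeling of colors, the same partition the paper uses (singleton class together with $m$ pairs at distance $m$ along the cycle); your rainbow-window reduction is a tidy way to organize the $b^+/b^-$ verification that the paper carries out by direct inspection.
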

\begin{proof}
The coloring $\varsigma(0)=0$ and $\varsigma(i)=\varsigma(i+m)=i$ for $i\in \{1,2,\ldots,m\}$ defines a $b$-coloring of $\overrightarrow{C}_{2m+1}(J)$ because the vertex $0$ is incident to the vertices  $1,2,\ldots, m$ with colors $1,2,\ldots, m$ respectively, and the vertex $0$ is incident from the vertices  $m+1,m+2,\ldots, 2m$ with colors $1,2,\ldots, m$ respectively. Moreover, the vertex $m+i$ (colored $i$) is incident to the vertices $V_i:=\{m+1+i,m+2+i, \ldots, 2m+i=i-1\}$ with colors $i+1,i+2,\ldots, i-1$. Finally, the vertex $i$ (colored $i$) is incident from the set of vertices $V_i$.
Since $\Delta(\overrightarrow{C}_{2m+1}(J))=m$, by Equation \ref{eq2} the result follows.



\end{proof}

\begin{corollary}\label{cor10}
Let $\overrightarrow{C}_{n}(J)$ be a circulant digraph of order $n\geq 4$. If $J=\{1,2, \dots, k \}$, with $ 1\leq k \leq \left\lfloor \frac{n-2}{2}\right\rfloor$ then \[dib(\overrightarrow{C}_{n}(J))= k+1.\]
\end{corollary}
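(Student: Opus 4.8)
The plan is to follow the template of Corollary \ref{cor9}: get the upper bound from regularity via Equation \ref{eq2}, and match it with an explicit acyclic $b$-coloring on $k+1$ colors. Since $J=\{1,\dots,k\}$, every vertex $i$ has out-neighbours $i+1,\dots,i+k$ and in-neighbours $i-1,\dots,i-k$ (all indices mod $n$), so $\Delta^+=\Delta^-=k$ and hence $\Delta=k$. Equation \ref{eq2} then immediately gives $dib(\overrightarrow{C}_n(J))\le k+1$, and the work is to produce a coloring attaining this value.

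For the lower bound I would take the periodic coloring $\varsigma(i)=i \bmod (k+1)$ on $V=\{0,1,\dots,n-1\}$, which uses exactly the $k+1$ colors $0,1,\dots,k$. The $b$-vertex conditions are the quickest to check. If $i\le n-1-k$, then the out-neighbours $i+1,\dots,i+k$ are $k$ consecutive integers, so their residues mod $k+1$ are precisely the $k$ colors different from $\varsigma(i)$; thus such an $i$ is a $b^+$-vertex. Because the hypothesis $k\le\left\lfloor\frac{n-2}{2}\right\rfloor$ gives $n\ge 2k+2$, hence $k\le n-1-k$, the vertices $0,1,\dots,k$ all satisfy this and carry all $k+1$ colors, so every color class contains a $b^+$-vertex. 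Symmetrically, each vertex $i$ with $i\ge k$ has in-neighbours $i-1,\dots,i-k$ realizing every color $\ne\varsigma(i)$, and the vertices $k,k+1,\dots,2k$ already exhaust the $k+1$ colors; so every class also contains a $b^-$-vertex.

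The step I expect to be the main obstacle is verifying that each color class is acyclic. Fix a color $c$; its class is the set of integers $a_0<a_1<\dots<a_t$ in $\{0,\dots,n-1\}$ congruent to $c$ modulo $k+1$, so consecutive members differ by exactly $k+1$. For $p<q$ one has $a_q-a_p=(q-p)(k+1)\ge k+1>k$, so no dart runs from a smaller member to a larger one; the only candidate internal darts are the wrap-around ones, from $a_p$ to $a_q$ with $p>q$, and such a dart exists precisely when $(p-q)(k+1)\in[n-k,n-1]$. The key observation I would use is that this interval contains only the $k$ consecutive integers $n-k,\dots,n-1$, hence at most one multiple of $k+1$; consequently there is a single admissible gap value $d^{\ast}=p-q$, and every internal dart strictly decreases the index by $d^{\ast}$. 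Iterating darts therefore forces a strict decrease of the index, so no directed cycle can close and the class induces a DAG.

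Combining the two bounds yields $dib(\overrightarrow{C}_n(J))=k+1$. The only place where the full strength of $k\le\left\lfloor\frac{n-2}{2}\right\rfloor$ is genuinely used is in guaranteeing that the witnesses $0,\dots,k$ and $k,\dots,2k$ stay within the non-wrap-around range; I would double-check the boundary instance $n=2k+2$ explicitly, since there the witness ranges are as tight as the inequality $k\le n-1-k$ allows, and confirm that this hypothesis is exactly what separates the present case from the circulant-tournament case $n=2k+1$ of Corollary \ref{cor9}.
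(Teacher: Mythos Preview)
Your proof is correct and follows essentially the same approach as the paper: the same periodic coloring $\varsigma(i)=i\bmod(k+1)$ together with the upper bound from Equation~\ref{eq2}. In fact you do \emph{more} than the paper, which checks the $b^+$- and $b^-$-witnesses but never verifies that the color classes are acyclic; your ``at most one multiple of $k+1$ in the wrap-around window $[n-k,n-1]$'' argument is exactly the missing ingredient, and is correct.
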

\begin{proof}
We define a coloring $\varsigma$ using $k+1$ colors as follows,  $\varsigma(t(k+1)+i)=i$ for $t\in \{0,1,\ldots, \left\lfloor \frac{n}{k+1}-1\right\rfloor\}$ and $i\in \{0,1,\ldots, k\}$. Let $r$ be the remainder of $\frac{n}{k+1}$. If $r>0$, then $\varsigma(t(k+1)+i)=i$ for $t=\left\lfloor \frac{n}{k+1}\right\rfloor$ and $i\in \{0,1,\ldots, r-1\}$.

Such coloring $\varsigma$ defines a $b$-coloring of $\overrightarrow{C}_{n}(J)$ because the vertex $i$ is incident to the vertices $i+1,i+2, \ldots, i+k$  for  $i\in \{0,1, \ldots, k\}$   and  in the vertex 
 $(k+i+1)$ is incident from the vertices  $i+1,i+2,\ldots, i+k$ for each $i\in \{0,1, \ldots, k\}$. Since $\Delta(\overrightarrow{C}_{n}(J))$ is $k$, by Equation \ref{eq2} the result follows.
 \end{proof}


A digraph is \textit{strongly connected} if for every pair of vertices $u$ and $v$ there is a directed $uv$-path. The strongly connected components of a digraph $D$ induced equivalence relation $\sim$ into subdigraphs that are themselves strongly connected, therefore we have the digraph $\tilde{D}:= D/\sim$.

\begin{theorem}\cite{MR2107429}
    If $T$ is a tournament with (exactly) $k$ strongly connected components, then $\tilde{T}$ is the transitive tournament of order $k$.
\end{theorem}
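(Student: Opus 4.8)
The plan is to argue directly that $\tilde{T}$ is a tournament on $k$ vertices and that it is acyclic, and then to invoke the fact that an acyclic tournament on $k$ vertices is, up to isomorphism, the unique transitive tournament of order $k$. Write $C_1,\dots,C_k$ for the strongly connected components of $T$, which are exactly the vertices of $\tilde{T}$, and for two distinct components $C$ and $C'$ write $C\to C'$ whenever $T$ contains at least one dart from a vertex of $C$ to a vertex of $C'$.

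First I would establish that $\tilde{T}$ is a tournament. Since $T$ is a tournament, between any two vertices lying in distinct components there is exactly one dart, so at least one of $C\to C'$ or $C'\to C$ holds. The key step is to rule out that both hold simultaneously. Suppose they did: there are vertices $u,u'\in C$ and $v,v'\in C'$ with darts $uv$ and $v'u'$. Because $C$ is strongly connected there is a directed path from $u'$ to $u$ inside $C$, and because $C'$ is strongly connected there is a directed path from $v$ to $v'$ inside $C'$. Concatenating $u'\rightsquigarrow u$, the dart $uv$, the path $v\rightsquigarrow v'$, and the dart $v'u'$ produces a closed directed walk meeting both $C$ and $C'$; but then all of its vertices lie in a single strongly connected component, contradicting $C\neq C'$. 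Hence exactly one direction occurs, and $\tilde{T}$ is an orientation of the complete graph on the $k$ components.

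Next I would show that $\tilde{T}$ has no directed cycle. A $2$-cycle is already excluded, since $\tilde{T}$ is a tournament. For a directed cycle $C_{i_1}\to C_{i_2}\to\dots\to C_{i_\ell}\to C_{i_1}$ with $\ell\geq 3$, I would choose a dart realizing each quotient arc and, exactly as above, use strong connectivity inside each $C_{i_j}$ to stitch the chosen darts together into a single closed directed walk of $T$ passing through all $\ell$ distinct components. This again forces those components to coincide, a contradiction. Therefore $\tilde{T}$ is acyclic.

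Finally I would conclude by the elementary observation that an acyclic tournament is transitive and that a transitive tournament is determined up to isomorphism by its order, since acyclicity yields a topological ordering which totality makes a linear order; as $\tilde{T}$ is an acyclic tournament on $k$ vertices, it is the transitive tournament of order $k$. The main obstacle is really the lifting argument in the first two steps, namely carefully turning a cycle at the level of the quotient (whether of length two or longer) into a genuine directed cycle of $T$ by interpolating paths inside the strongly connected components, since everything else reduces to standard facts about tournaments.
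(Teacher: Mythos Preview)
Your argument is correct and is the standard proof of this classical fact: the lifting of a putative quotient cycle to a closed directed walk in $T$ via strong connectivity of the components is exactly the right mechanism, and the conclusion that an acyclic tournament is the unique transitive tournament of its order is elementary. One small cosmetic point: in the second step you could dispense with the separate case $\ell=2$, since your stitching argument for $\ell\geq 3$ works verbatim for $\ell=2$ as well (and indeed is what you already did in step one).

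As for comparison with the paper: there is nothing to compare. The paper does not prove this theorem at all; it merely quotes it with a citation to \cite{MR2107429} and uses it as a black box to obtain the subsequent corollary $k/2\leq dib(T)$. Your write-up therefore supplies a proof where the paper gives none.
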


\begin{corollary}
    If $T$ is a tournament with (exactly) $k$ strongly connected components, then $k/2\leq dib(T)$.
\end{corollary}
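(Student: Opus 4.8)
The plan is to realize the transitive tournament of order $k$ as an induced subdigraph of $T$, and then to invoke both the monotonicity of $dib$ under taking induced subdigraphs and the exact value computed in Corollary~\ref{cor7}.

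First I would appeal to the cited theorem: because $T$ has exactly $k$ strongly connected components, its condensation $\tilde{T}$ is the transitive tournament of order $k$. Writing $S_1,\dots,S_k$ for the components, labelled according to the acyclic order of $\tilde{T}$, I would record the key structural fact that between any two distinct components all darts of $T$ run the same way. Indeed, were there darts in both directions between $S_i$ and $S_j$, the condensation $\tilde{T}$ would contain a directed $2$-cycle, contradicting that it is (transitive, hence) acyclic. Thus, with the acyclic order $S_1,\dots,S_k$, for every $i<j$ every dart of $T$ joining $S_i$ and $S_j$ goes from a vertex of $S_i$ to a vertex of $S_j$.

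Next I would choose one representative $x_i\in S_i$ for each $i$ and set $H:=T[\{x_1,\dots,x_k\}]$. Since $T$ is a tournament, $H$ is a tournament on $k$ vertices; and by the previous step, for every $i<j$ the unique dart between $x_i$ and $x_j$ is $x_ix_j$. Hence $H$ is exactly the transitive tournament of order $k$ with acyclic order $(x_1,\dots,x_k)$.

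Finally I would assemble the bound. Corollary~\ref{cor7} gives $dib(H)=\lceil k/2\rceil$, and since $H$ is an induced subdigraph of $T$, Equation~\eqref{eq3} yields $dib(T)\geq dib(H)=\lceil k/2\rceil\geq k/2$, which is the desired inequality. I do not anticipate a genuine obstacle: the only delicate point is verifying that the darts between any two components are unidirectional, since this is precisely what forces the chosen representatives to induce a \emph{transitive} tournament rather than an arbitrary one; the remainder is a direct application of Corollary~\ref{cor7} and the induced-subdigraph inequality~\eqref{eq3}.
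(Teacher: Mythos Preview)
Your proof is correct and follows exactly the approach the paper intends: the corollary is stated without proof immediately after the cited theorem on $\tilde{T}$, so the implicit argument is precisely to pick one representative per strongly connected component, observe that these induce a transitive tournament of order $k$, and then combine Corollary~\ref{cor7} with the monotonicity inequality~\eqref{eq3}. Your verification that the darts between distinct components are unidirectional is the only detail one needs to supply, and you handle it correctly.
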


Let $H$ be a digraph. The digraph $D$ is $H$-free if $D$ has no subdigraph isomorphic to $H$. A tournament $H$ is a \textit{hero} if and only if there exists $\alpha>0$ such that every $H$-free tournament $D$ has a transitive subset of cardinality at least $\alpha |V (D)|$, see \cite{MR2995716}.

\begin{corollary}
If $H$ is a hero and $D$ is an $H$-free tournament, then $dib(D)=\Theta(n)$.
\end{corollary}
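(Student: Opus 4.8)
The plan is to prove the two bounds $dib(D)\le n$ and $dib(D)\ge c\,n$ for some constant $c>0$ depending only on $H$; together these give $dib(D)=\Theta(n)$. The upper bound is immediate, since any coloring of a digraph on $n$ vertices uses at most $n$ colors, so $dib(D)\le n$ and hence $dib(D)=O(n)$. All the content is therefore in the lower bound.

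For the lower bound I would invoke the defining property of a hero directly. Since $H$ is a hero and $D$ is an $H$-free tournament of order $n$, there is a constant $\alpha>0$ (depending only on $H$) and a transitive subset $S\subseteq V(D)$ with $|S|\ge \alpha n$. Because $D$ is a tournament, the subdigraph induced by $S$ is itself a tournament, and as $S$ is transitive this induced subdigraph is a transitive tournament $T$ of order $|V(T)|\ge \alpha n$. Thus the hero hypothesis hands us a linear-sized transitive subtournament for free.

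Now I would chain two facts already established in the excerpt. First, by Corollary~\ref{cor7}, the transitive tournament $T$ satisfies $dib(T)=\left\lceil |V(T)|/2\right\rceil\ge \alpha n/2$. Second, since $T$ is an induced subdigraph of $D$, Equation~\ref{eq3} gives $dib(T)\le dib(D)$. Combining these, $dib(D)\ge dib(T)\ge \alpha n/2$, which is exactly the $\Omega(n)$ bound with $c=\alpha/2$. Together with $dib(D)\le n$ we conclude $dib(D)=\Theta(n)$.

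This argument has essentially no technical obstacle; the only points worth checking are structural rather than computational. One must confirm that a transitive subset of a tournament induces a transitive tournament, so that Corollary~\ref{cor7} applies with its exact value, and that this tournament is genuinely an \emph{induced} subdigraph, so that the monotonicity bound~\ref{eq3} is available. Both hold automatically here, since every subset of the vertices of a tournament induces a subtournament. The heart of the proof is thus the single observation that the hero property produces a transitive subtournament of linear order, on which $dib$ is known exactly from Corollary~\ref{cor7}.
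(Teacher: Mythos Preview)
Your argument is correct and is precisely the intended one: the paper states this corollary without proof, as an immediate consequence of the hero definition together with Corollary~\ref{cor7} and the monotonicity bound~(\ref{eq3}), exactly as you have written it out.
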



\section{On regular digraphs}

In this section, we state some results about regular digraphs. We recall that the number of darts in a shortest weak $uv$-path (not necessarily a directed path) is denoted by $d(u,v)$, and if $u$ and $v$ belong to different components of a week disconnected digraph, then $d(u,v)= \infty$.

The following two theorems are generalizations of results in \cite{MR2063820}.

\begin{theorem}\label{teo9}
Let $D$ be a digraph, with $\Delta(D)=\Delta$, containing sets of vertices $B^+=\{u_1,\ldots, u_{\Delta+1}\}$ and $B^-=\{v_1, \ldots, v_{\Delta+1}\}$ with $deg^{+}(u_i)=\Delta$ and $deg^{-}(v_i)=\Delta$ for all $i\in\{1,\ldots,\Delta+1\}$ such that $d(x, y)\geq 4$ for any $x,y\in B^+\cup B^-$. Therefore, $dib(D)= \Delta+1$.
\end{theorem}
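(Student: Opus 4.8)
The plan is to combine the upper bound with an explicit construction. Since \eqref{eq2} already gives $dib(D)\le \Delta+1$, it suffices to produce an acyclic $b$-coloring $\varsigma$ of $D$ with $\Delta+1$ colors in which, for each color $i\in\{1,\dots,\Delta+1\}$, the vertex $u_i$ is a $b^+$-vertex and $v_i$ is a $b^-$-vertex of class $i$. First I would set $\varsigma(u_i)=\varsigma(v_i)=i$; then, using that $deg^+(u_i)=\Delta$ and $deg^-(v_i)=\Delta$, I would color the out-neighborhood $N^+(u_i)$ bijectively with the $\Delta$ colors of $\{1,\dots,\Delta+1\}\setminus\{i\}$, and symmetrically color $N^-(v_i)$ bijectively with $\{1,\dots,\Delta+1\}\setminus\{i\}$. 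After this partial assignment each $u_i$ already sees every color $\ne i$ among its out-neighbors and each $v_i$ sees every color $\ne i$ among its in-neighbors, so these vertices will be the required $b^+$- and $b^-$-vertices in any completion.

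The hypothesis $d(x,y)\ge 4$ is exactly what makes this partial coloring legitimate and acyclic. I would first record two facts. If some vertex lay in two of the sets $N^+(u_i),N^-(v_j)$, or in one of them and in $B^+\cup B^-$, it would yield a weak path of length at most $2$ between two distinct special vertices, contradicting $d\ge 4$; hence all these neighborhoods are pairwise disjoint and avoid $B^+\cup B^-$, so $\varsigma$ is well defined on $S:=(B^+\cup B^-)\cup\bigcup_i N^+(u_i)\cup\bigcup_i N^-(v_i)$. Next, any dart joining two vertices that lie in different neighborhoods, or in a neighborhood and a non-incident special vertex, would produce a weak path of length at most $3$ between two distinct special vertices, again impossible. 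Consequently, using also $d(u_j,v_j)\ge 4$, inside each color class the vertices of $S$ are pairwise non-adjacent, so each class of the partial coloring induces an arcless, hence acyclic, subdigraph.

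It remains to color the vertices outside $S$ while keeping every color class acyclic. By symmetry—passing to the converse $D^{op}$ if necessary, which preserves $\Delta$, all weak distances, and $dib$, and interchanges the roles of $B^+$ and $B^-$—I may assume $\Delta=\Delta^-$. I would then process the uncolored vertices in an arbitrary order, assigning to each vertex $v$ any color not used by its already-colored in-neighbors; since $deg^-(v)\le \Delta^-=\Delta$ and there are $\Delta+1$ colors, such a color always exists. The coloring stays acyclic by induction: when $v$ is added, any newly created monochromatic directed cycle must pass through $v$ and therefore close along a dart into $v$ from an already-colored in-neighbor of the same color as $v$, which the rule forbids. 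Since the extension never recolors $S$, the $b^+$- and $b^-$-vertices fixed above persist, and $\varsigma$ is an acyclic $b$-coloring with $\Delta+1$ colors; thus $dib(D)\ge \Delta+1$, and with \eqref{eq2} equality follows.

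The step I expect to be the crux is this greedy extension, and the reason it succeeds is precisely that $\Delta=\min\{\Delta^+,\Delta^-\}$: committing to the in-neighbor avoidance rule (after passing to $D^{op}$) keeps the number of forbidden colors at most $\Delta$, whereas a rule tied to the larger of the two maximum degrees could forbid $\Delta+1$ colors and stall. A second, more technical point is the possible overlap of $B^+$ and $B^-$: the cleanest argument assumes these sets are disjoint, so that no single vertex must anchor two classes at once; the overlapping case needs a separate check that a shared vertex can simultaneously realize its $b^+$- and $b^-$-role without a coloring conflict on $N^+\cap N^-$.
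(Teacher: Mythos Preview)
Your approach is essentially the paper's: color each $u_i,v_i$ with $i$, bijectively color $N^+(u_i)$ and $N^-(v_i)$ with the remaining colors (the distance hypothesis guaranteeing well-definedness), then extend greedily and acyclically; your treatment of the extension step (pass to $D^{op}$ so that $\Delta=\Delta^-$, then avoid colors of already-colored in-neighbors) is in fact more explicit than the paper's ``greedy acyclic coloring''. The overlap issue you flag is exactly what the paper handles first: it relabels so that any common vertex satisfies $u_i=v_i$ (same index), which removes the color conflict, and then colors $N^-(v_i)$ only on the still-uncolored vertices so that digons at $u_i=v_i$ cause no clash.
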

\begin{proof}
First, we proceed to give a coloring. If $|B^+\cap B^- | = l$, assume that they are the first $l$ elements of each set, i.e., $u_1=v_1,\ldots,u_l=v_l$ for $0\leq l\leq \Delta+1$.

The color $i$ is assigned to the vertices $u_i$ and $v_i$, for all $i\in \{1,\ldots,\Delta+1\}$. Then, for each dart $u_iz$, the vertex $z$ is colored by a color of $\{1,\ldots, \Delta+1 \}-\{i\}$ such that the vertices incident from $u_i$ have different colors. Next, for each dart $zv_i$ the vertex $z$ (consider only uncolored vertices) is colored with a color from $\{1,\ldots, \Delta+1 \}-\{i\}$ such that the vertices incident from $u_i$ have different colors.

Finally, we color the remaining vertices of the digraph following a greedy acyclic coloring. Therefore, each color class is acyclic and the result follows.
\end{proof}

A digraph $D$ is called \emph{$r$-regular} if $deg^+(v)=deg^-(v)=r$ for every vertex $v\in D$.

\begin{theorem}\label{teo10}
Let $D$ be an $r$-regular digraph  with $r\geq 2$ and at least $8r^4$ vertices then $dib(D)=r+1$.
\end{theorem}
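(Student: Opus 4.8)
The plan is to reduce Theorem~\ref{teo10} to Theorem~\ref{teo9} by showing that every sufficiently large $r$-regular digraph contains the required ``spread-out'' configuration of $b^+$- and $b^-$-vertices. Since $D$ is $r$-regular we have $\Delta(D)=r$, and \emph{every} vertex automatically satisfies $deg^+(v)=deg^-(v)=r$, so the out-degree and in-degree hypotheses of Theorem~\ref{teo9} hold for any choice of vertices. Thus the whole problem collapses to a purely combinatorial packing statement: find two sets $B^+$ and $B^-$, each of size $r+1$, such that any two chosen vertices lie at weak distance at least $4$ from one another. Once such sets are produced, Theorem~\ref{teo9} immediately gives $dib(D)=r+1$, and the upper bound $dib(D)\le\Delta+1=r+1$ from Equation~\eqref{eq2} makes this an equality.

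First I would set up a greedy/packing argument on the ``weak ball'' neighborhoods. For a vertex $x$, the set of vertices at weak distance at most $3$ from $x$ has bounded size: since every vertex has total degree $deg(v)=deg^+(v)+deg^-(v)=2r$, the number of vertices within weak distance $3$ is at most $1+2r+2r(2r-1)+2r(2r-1)^2$, which is $O(r^3)$. Concretely one can bound the closed weak $3$-ball by something like $(2r)^3+(2r)^2+2r+1 \le 8r^3+4r^2+2r+1$. To build the combined set $B^+\cup B^-$ of size at most $2(r+1)$ with pairwise weak distance at least $4$, I would select vertices greedily: each time a vertex is chosen, it forbids only the vertices in its weak $3$-ball from being chosen later. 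The total number of vertices forbidden after choosing all $2(r+1)$ vertices is at most $2(r+1)\cdot(8r^3+4r^2+2r+1)$, which is comfortably less than $8r^4$ for $r\ge 2$. Hence the process never exhausts $V(D)$, and the selection succeeds.

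The delicate point is the interplay between the ``at least $r+1$ vertices in each of $B^+$ and $B^-$'' requirement and the possible overlap $B^+\cap B^-$. Since in a regular digraph every vertex is simultaneously a candidate for both $B^+$ and $B^-$, the cleanest route is to build a single set $S$ of $r+1$ well-separated vertices and take $B^+=B^-=S$; then $l=|B^+\cap B^-|=r+1$, which is permitted in the proof of Theorem~\ref{teo9} (the case $l=\Delta+1$). This avoids the bookkeeping of disjoint sets entirely and reduces the packing count: I only need $r+1$ vertices pairwise at weak distance $\ge 4$, forbidding at most $(r+1)(8r^3+4r^2+2r+1)$ vertices, which for $r\ge 2$ is at most $8r^4$ with room to spare. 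The main obstacle I anticipate is verifying the counting inequality sharply enough to justify the stated bound $8r^4$: one must check that the worst-case weak-ball size really is $O(r^3)$ and that the leading constant, after multiplying by $r+1$ selections, stays below $8r^4$ for all $r\ge 2$ rather than merely for large $r$. I would handle the boundary cases $r=2,3$ by direct estimation and then argue the asymptotic dominance $8r^4 > (r+1)(8r^3+\cdots)$ for $r\ge 4$, where the $8r^4$ term eventually overwhelms the $8r^4+8r^3+\cdots$ coming from the product by a careful comparison of lower-order terms.
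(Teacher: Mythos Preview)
Your approach is exactly the paper's: reduce to Theorem~\ref{teo9} by greedily selecting $r+1$ vertices at pairwise weak distance at least $4$, and take $B^+=B^-$ since regularity makes every vertex eligible. However, your arithmetic does not close. The claim that $(r+1)(8r^3+4r^2+2r+1)\le 8r^4$ ``with room to spare'' is false for every $r$, since already $(r+1)\cdot 8r^3=8r^4+8r^3>8r^4$; for the same reason your proposed ``asymptotic dominance'' of $8r^4$ over $8r^4+8r^3+\cdots$ can never occur, so the suggested fix via boundary cases plus asymptotics is a dead end.

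Two small corrections repair this, and they are precisely what the paper does. First, to obtain $r+1$ well-separated vertices you only need to delete $r$ weak $3$-balls before selecting the final vertex, so the multiplier should be $r$, not $r+1$. Second, you must keep the sharper ball estimate you actually wrote first, namely $1+2r+2r(2r-1)+2r(2r-1)^2=8r^3-4r^2+2r+1$, rather than loosening it to $8r^3+4r^2+2r+1$; the negative $-4r^2$ term is exactly what makes
\[
r(8r^3-4r^2+2r+1)=8r^4-4r^3+2r^2+r<8r^4
\]
hold for all $r\ge 2$, with no case analysis needed.
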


\begin{proof} We find $r+1$ vertices $v_1, \ldots, v_{r+1}$ such that $d(v_i, v_j)\geq 4$  for all $i \neq j$ as follows.
Let $v_1$ be an arbitrary vertex and delete it together with the first, second, and third neighborhoods, then the number of deleted vertices is at most $1+2r+2r(2r-1)+2r(2r-1)^2 = 1+2r-4r^2+8r^3\in o(r^4)$. Now, we choose a vertex $v_2$ (then $d(v_2, v_1)\geq 4$ in $D$) and delete it together with the first, second, and third neighborhood, then the total number of vertices deleted is at most $ 1+2r-4r^2+8r^3$. So, we can repeat this process for vertices $v_3, \ldots, v_{r}$. Since $r(1+2r-4r^2+8r^3) < 8r^4$, there remains at least a vertex $v_{r+1}$. Due to $d(v_i, v_j)\geq 4$ for $i>j$ and by Theorem \ref{teo9}, $dib(D)= r+1$.

\end{proof}



The following remarks are inspired by the paper \cite{MR2606622}. Theorem \ref{teo10} implies that for a given $r$, there is only a finite number of $r$-regular digraphs $D$ with $dib(D)<r+1$. 

A $1$-regular digraph is a set of directed cycles. It is not hard to see that a directed cycle has a dib-chromatic number 2. Hence, there are no $1$-regular digraphs $D$ with $dib(D)<2$.

For the case of $r=2$, the upper bound indicates that any $2$-regular digraph of $n\geq 106$ vertices has a dib-chromatic number 3. However, this bound could be far from the optimal value. 

We use the database of small-order graphs (with vertices of small degrees) that appears in \cite{MR2973372} to generate the following $2$-regular digraphs of dib-chromatic number $2$. Figure \ref{Fig1} displays such digraphs.

\begin{figure}[htbp!]
\begin{center}
\includegraphics{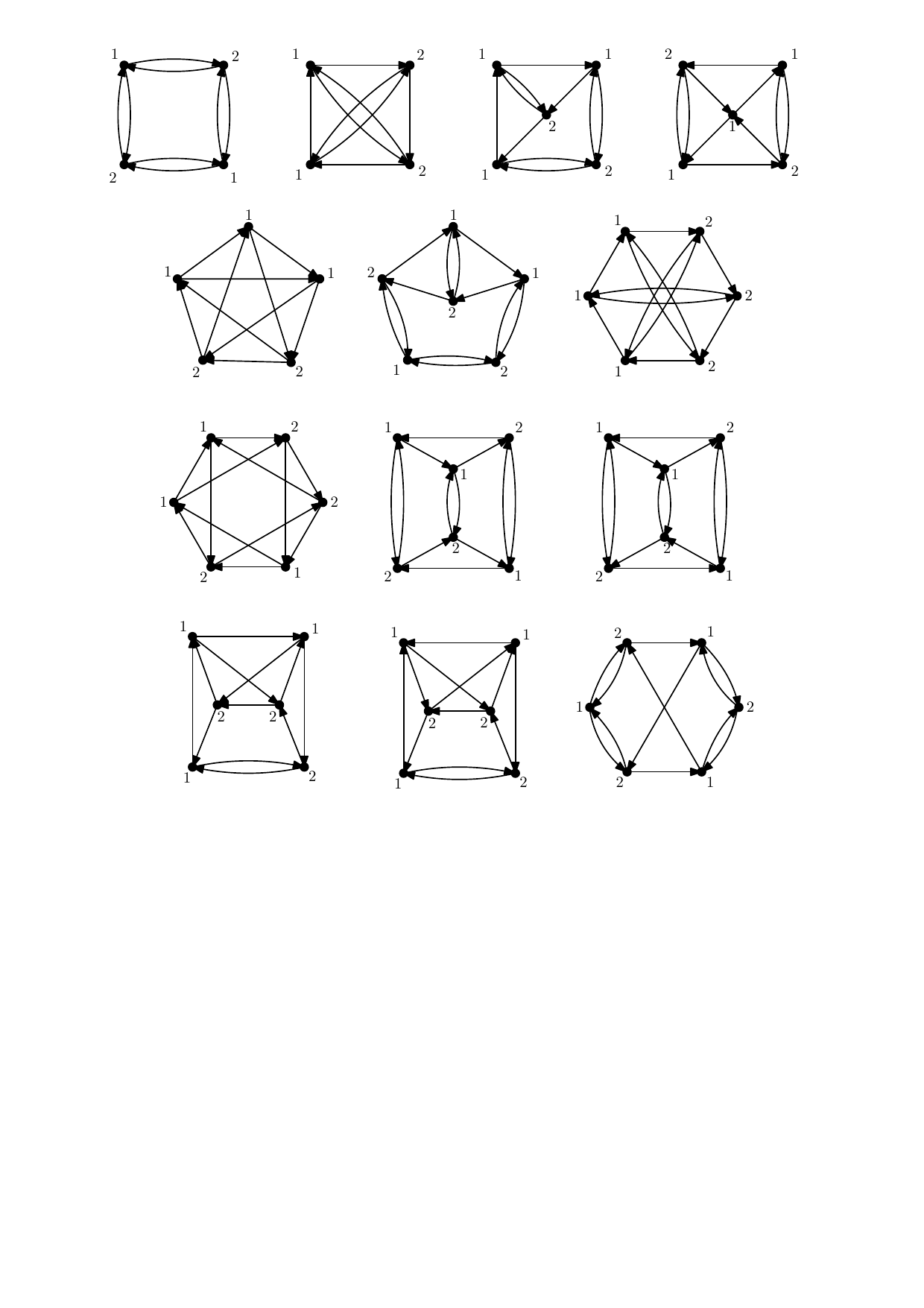}
\caption{\label{Fig1} For each digraph $D$, $dib(D)=2$.}
\end{center}
\end{figure}

 We conjecture that the remaining $2$-regular digraphs have dib-chromatic number 3. 
 
 Another nice problem could be to generate the set of digraphs of small order with vertices of small degrees (with digons).


\section{Statements and Declarations}

This work was partially supported by PAIDI-M{\' e}xico under Project PAIDI/007/21.

The authors have no relevant financial or non-financial interests to disclose.

All authors contributed to the conception and design of the study. The first draft of the manuscript was written by Christian Rubio-Montiel and all authors commented on previous versions of the manuscript. All authors read and approved the final manuscript.


\bibliographystyle{plain}
\bibliography{biblio}

\end{document}